 \def\G{{\mathcal G}}
 \def\opn#1#2{\def#1{\operatorname{#2}}} % to make operators
 \opn\chara{char} \opn\length{\ell} \opn\pd{pd} \opn\rk{rk}
 \opn\projdim{proj\,dim} \opn\injdim{inj\,dim} \opn\rank{rank}
 \opn\depth{depth} \opn\grade{grade} \opn\height{height}
 \opn\embdim{emb\,dim} \opn\codim{codim}
 \opn\Tr{Tr} \opn\bigrank{big\,rank}
 \opn\superheight{superheight}\opn\lcm{lcm}
 \opn\trdeg{tr\,deg}%\emph{
 \opn\reg{reg} \opn\lreg{lreg} \opn\ini{in} \opn\lpd{lpd}
 \opn\size{size} \opn\sdepth{sdepth}
 \opn\link{link}\opn\fdepth{fdepth}\opn\lex{lex}
 \opn\div{div} \opn\Div{Div} \opn\cl{cl} \opn\Cl{Cl}
 \opn\Spec{Spec} \opn\Supp{Supp} \opn\supp{supp} \opn\Sing{Sing}
 \opn\Ass{Ass} \opn\Min{Min}\opn\Mon{Mon}
 \opn\Ann{Ann} \opn\Rad{Rad} \opn\Soc{Soc} \opn\ir{ir}
 \opn\Im{Im} \opn\Ker{Ker} \opn\Coker{Coker} \opn\Am{Am}
 \opn\Hom{Hom} \opn\Tor{Tor} \opn\Ext{Ext} \opn\End{End}
 \opn\Aut{Aut} \opn\id{id}
 \opn\nat{nat}
 \opn\pff{pf}%   \pf exists already
 \opn\Pf{Pf} \opn\GL{GL} \opn\SL{SL} \opn\mod{mod} \opn\ord{ord}
 \opn\Gin{Gin} \opn\Hilb{Hilb}\opn\sort{sort}
 \opn\aff{aff} \opn
\opn\relint{relint} \opn\st{st}
 \opn\lk{lk} \opn\cn{cn} \opn\core{core} \opn\vol{vol}  \opn\inp{inp} \opn\nilpot{nilpot}
 \opn\link{link} \opn\star{star}\opn\lex{lex}\opn\set{set}
 \opn\width{wd}
 \opn\ecart{ecart}
 \opn\gr{gr}
 \def\pot#1#2{#1[\kern-0.28ex[#2]\kern-0.28ex]}
 \opn\dirlim{\underrightarrow{\lim}}
 \opn\inivlim{\underleftarrow{\lim}}
 \def\Implies{\ifmmode\Longrightarrow \else
         \unskip${}\Longrightarrow{}$\ignorespaces\fi}
 \def\implies{\ifmmode\Rightarrow \else
         \unskip${}\Rightarrow{}$\ignorespaces\fi}
 \def\iff{\ifmmode\Longleftrightarrow \else
         \unskip${}\Longleftrightarrow{}$\ignorespaces\fi}
 \def\Soc{{\mathbf Soc}}
 \def\opn#1#2{\def#1{\operatorname{#2}}} % to make operators
 \opn\chara{char} \opn\length{\ell} \opn\pd{pd} \opn\rk{rk}
 \opn\projdim{proj\,dim} \opn\injdim{inj\,dim} \opn\rank{rank}
 \opn\depth{depth} \opn\grade{grade} \opn\height{height}
 \opn\bigheight{bigheight}
 \opn\embdim{emb\,dim} \opn\codim{codim}
 \opn\superheight{superheight}\opn\lcm{lcm}
 \opn\trdeg{tr\,deg}%\emph{
 \opn\reg{reg} \opn\lreg{lreg} \opn\ini{in} \opn\lpd{lpd}
 \opn\size{size} \opn\sdepth{sdepth}
 \opn\link{link}\opn\fdepth{fdepth}\opn\lex{lex}
 \opn\type{type}
 \opn\gap{gap}
 \opn\arithdeg{arith-deg}
 \opn\Deg{Deg}
 \opn\sat{sat}
 \opn\mat{mat}
 \opn\Mat{Mat}
 \opn\div{div} \opn\Div{Div} \opn\cl{cl} \opn\Cl{Cl}
 \opn\Spec{Spec} \opn\Supp{Supp} \opn\supp{supp} \opn\Sing{Sing}
 \opn\Ass{Ass} \opn\Min{Min}\opn\Mon{Mon} \opn\Max{Max}
 \opn\Ann{Ann} \opn\Rad{Rad} \opn\Soc{Soc}
 \opn\Im{Im} \opn\Ker{Ker} \opn\Coker{Coker} \opn\Am{Am}
 \opn\Hom{Hom} \opn\Tor{Tor} \opn\Ext{Ext} \opn\End{End}
 \opn\Aut{Aut} \opn\id{id}
 \opn\nat{nat}
 \opn\pff{pf}%   \pf exists already
 \opn\Pf{Pf} \opn\GL{GL} \opn\SL{SL} \opn\mod{mod} \opn\ord{ord}
 \opn\Gin{Gin} \opn\Hilb{Hilb}\opn\sort{sort}
 \opn\PF{PF}\opn\Ap{Ap}
 \opn\mult{mult}
 \opn\bight{bight}
  \opn\ir{ir}
 \opn\aff{aff}
 \opn\relint{relint} \opn\st{st}
 \opn\lk{lk} \opn\cn{cn} \opn\core{core} \opn\vol{vol}  \opn\inp{inp} \opn\nilpot{nilpot}
 \opn\link{link} \opn\star{star}\opn\lex{lex}\opn\set{set}
 \opn\width{wd}
 \opn\Fr{F}
 \opn\QF{QF}
 \opn\G{G}
 \opn\type{type}\opn\res{res}
 \opn\conv{conv}
 \opn\Shad{Shad}
 \opn\gr{gr}
 \def\pot#1#2{#1[\kern-0.28ex[#2]\kern-0.28ex]}
 \opn\dirlim{\underrightarrow{\lim}}
 \opn\inivlim{\underleftarrow{\lim}}
 \def\Implies{\ifmmode\Longrightarrow \else
         \unskip${}\Longrightarrow{}$\ignorespaces\fi}
 \def\implies{\ifmmode\Rightarrow \else
         \unskip${}\Rightarrow{}$\ignorespaces\fi}
 \def\iff{\ifmmode\Longleftrightarrow \else
         \unskip${}\Longleftrightarrow{}$\ignorespaces\fi}
\theoremstyle{plain}
\newtheorem{theorem}{Theorem}[section]
\newtheorem{corollary}[theorem]{Corollary}
\newtheorem{lemma}[theorem]{Lemma}
\theoremstyle{definition}
\newtheorem{example}[theorem]{Example}
\newtheorem{remark}[theorem]{Remark}
 \let\epsilon\varepsilon
 \let\kappa=\varkappa
 \def\qed{\ifhmode\textqed\fi
       \ifmmode\ifinner\quad\qedsymbol\else\dispqed\fi\fi}
 \def\textqed{\unskip\nobreak\penalty50
        \hskip2em\hbox{}\nobreak\hfil\qedsymbol
        \parfillskip=0pt \finalhyphendemerits=0}
 \def\dispqed{\rlap{\qquad\qedsymbol}}
 \opn\dis{dis}
 \def\pnt{{\raise0.5mm\hbox{\large\bf.}}}
 \opn\Lex{Lex}
\newcommand{\rmf}{\mathrm{f}}
\newcommand{\m}{\mathfrak{m}}
\newcommand{\q}{\mathfrak{q}}
\title{Irreducible multiplicity of idealizations}
\author{Tran Nguyen An}
\address{Tran Nguyen An: Thai Nguyen University of Education, Vietnam}
\email{antn@tnue.edu.vn}
\thanks{2020 {\em Mathematics Subject Classification.} 13A15, 13H10, 13H15}
\thanks{{\em Key words and phrases.} index of reducibility, irreducible multiplicity, dimension of socle, Cohen-Macaulay type, idealization}
\thanks{This work is supported by the Vietnam Ministry of Education and Training  under grant number B2025-TNA-03}
\begin{document}

\begin{abstract} Let $(R,\mathfrak{m})$ be a Noetherian local ring and $M$ a finitely generated $R$-module. We study the relations of the index of reducibility and the irreducible multiplicity of an $\mathfrak{m}$-primary ideal of $R$ and these of $\mathfrak{m} \times M$-primary ideal of the idealization. This generalizes one of the main results of S.Goto et al. (see \cite[Theorem 2.2]{GKL}).
\end{abstract}
\maketitle

\section{Introduction}
 Let $(R,\m)$ be a Noetherian local ring of dimension $d$ and $I$ an $\m$-primary ideal of $R$.  Let $M$ be a nonzero finitely generated $R$-module of dimension $t$. Recall that a submodule $N$ of $M$ is called an {\it irreducible submodule} if $N$ cannot be written as an intersection of two properly larger submodules of $M$.  The number of irreducible components of an irredundant irreducible decomposition of $N$, which is independent of the choice of the decomposition by N. Noether \cite{Noe}, is called the {\it index of reducibility} of $N$ and denoted by $\mathrm{ir}_M(N)$.
 We denote by $\ell_R(*)$ the length of an $R$-module $*$. Consider  $\ell_R(I^{n+1}M:_M \m /I^{n+1}M)$, the dimension of the socle of $M/I^{n+1}M$. Since $I$ is $\m$-primary, we also have 
$$\ell_R(I^{n+1}M:_M \m/I^{n+1}M)=\ir_M(I^{n+1}M),$$
where $\ir_M(I^{n+1}M)$ is the index of reducibility of $I^{n+1}M$.  In 2015, N. T. Cuong, P. H. Quy, and H. L. Truong proved that the function $\mathrm{ir}_M(I^{n+1}M)$ agrees with a polynomial function of degree $t-1$ for $n\gg 0$ (\cite[Theorem 4.1]{CQT}). That is, there exist integers $\rmf_I^0(M), \dots, \rmf_I^{t-1}(M)$ such that
\begin{align*} \label{eq00}
	\begin{split} 
		\mathrm{ir}_M(I^{n+1}M) =& \ell_R(I^{n+1}M:_M \m/I^{n+1}M) \\
		= & \rmf_I^0(M)\binom{n+t-1}{t-1}-\rmf_I^1(M)\binom{n+t-2}{t-2}+ \cdots +(-1)^{t-1}\rmf_I^{t-1}(M)
	\end{split}
\end{align*}
for $n\gg 0$. 
The numbers $\rmf_I^0(M), \dots, \rmf_I^{t-1}(M)$ are called the {\it irreducibility coefficients} of $M$ with respect to $I$ and the leading coefficient $\rmf_I^0(M)$ is called the {\it irreducible multiplicity} of $M$ with respect to $I$ (see \cite{AK}, \cite{T3}). The index of reducibility and irreducible multiplicity have a strong connection with the structure of rings (see \cite{AK}, \cite{ADKN}, \cite{CQT}, \cite{EN}, \cite{GSu}, \cite{Q2}, \cite{T3}). Some relations between (Hilbert-Samuel) multiplicity and irreducible multiplicity are also given (see \cite{AK}, \cite{T3}). Irreducible decomposition and index of reducibility of homogeneous ideal in idealization of a module are given in \cite{An}.

 The purpose of this paper is to study the relations of the index of reducibility and the irreducible multiplicity of an $\mathfrak{m}$-primary ideal of $R$ and these of $\mathfrak{m} \times M$-primary ideal of the idealization. Recall that the Cartesian product $R\times M$ is a commutative ring concerning componentwise addition and multiplication defined by
$$ (r_1, m_1)(r_2, m_2)=(r_1 r_2, r_1 m_2+r_2m_1), $$
where $r_1, r_2\in R$ and $m_1, m_2\in M$. This commutative ring is called the {\it idealization} of $M$ or the {\it trivial extension} of $R$ by $M$, denoted by $R\ltimes M$. Idealization of a module was introduced by Nagata \cite{Na}. One can find many properties of idealizations in \cite{AW} and \cite{Na}. 

The main result of this paper is as follows.

\begin{theorem}\label{T2} Let $I$ be an $\m$-primary ideal of $R$. Set $J=I\times IM$. Then
	\begin{equation*}
		\ir_M(I^{n+1}M) \leq \ir_{R\ltimes M}(J^{n+1}) \leq \ir_R(I^{n+1})+\ir_M(I^{n+1}M)
	\end{equation*}
for all $n\ge 0$. In particular, we have the following. 
	$$
	\mathrm{f}_I^0(M) \leq \mathrm{f}_J^0(R\ltimes M) \leq \mathrm{f}_I^0(R)+\mathrm{f}_I^0(M) .
	$$
	For $n\ge 0$, we further have the following.
	\begin{enumerate}[\rm(i)] 
	\item $\ir_M(I^{n+1}M) = \ir_{R\ltimes M}(J^{n+1}) $ if and only if $M/I^{n+1}M$ is a faithful $R/I^{n+1}$-module.
	\item $\ir_{R\ltimes M}(J^{n+1}) = \ir_R(I^{n+1})+\ir_M(I^{n+1}M)$ if and only if $(I^{n+1}:_R \m)M= I^{n+1}M$. 
	\end{enumerate}
\end{theorem}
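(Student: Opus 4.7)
The plan is to compute the colon submodule $J^{n+1}:_{R\ltimes M}(\fkm\times M)$ explicitly, split it as a direct product of two natural pieces, and thereby reduce $\ir_{R\ltimes M}(J^{n+1})$ to a sum of two terms, one of which is exactly $\ir_M(I^{n+1}M)$. Set $S:=R\ltimes M$ and $\fkM:=\fkm\times M$. A short induction on $n$, using the product rule $(r,m)(s,x)=(rs,\,rx+sm)$ in $S$, gives $J^{n+1}=I^{n+1}\times I^{n+1}M$.

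Writing out the condition $(r,m)\cdot(a,x)\in J^{n+1}$ for every $(a,x)\in\fkM$ and specializing once to $x=0$ and once to $a=0$ then shows
\[
J^{n+1}:_S\fkM \;=\; A\times(I^{n+1}M:_M\fkm),\qquad A:=(I^{n+1}:_R\fkm)\cap(I^{n+1}M:_R M);
\]
the sufficiency of these conditions is immediate. Because $\fkM$ annihilates $(J^{n+1}:_S\fkM)/J^{n+1}$, its $S$-length equals its $R/\fkm$-dimension, and the above factorization yields
\[
\ir_S(J^{n+1}) \;=\; \ell_R(A/I^{n+1})+\ir_M(I^{n+1}M).
\]
The containments $I^{n+1}\subseteq A\subseteq I^{n+1}:_R\fkm$ sandwich the middle length between $0$ and $\ir_R(I^{n+1})$, which yields both inequalities of the theorem, and passing to leading coefficients as $n\gg 0$ gives the inequality for $\rmf_I^{0}$.

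Equality in (i) amounts to $A=I^{n+1}$ and equality in (ii) to $A=I^{n+1}:_R\fkm$. For (ii), $A=I^{n+1}:_R\fkm$ is equivalent to $(I^{n+1}:_R\fkm)\subseteq I^{n+1}M:_R M$, i.e.\ $(I^{n+1}:_R\fkm)M\subseteq I^{n+1}M$; combined with the trivial reverse inclusion this is exactly the stated condition. For (i), the direction $(\Leftarrow)$ is automatic, since faithfulness means $I^{n+1}M:_R M=I^{n+1}$ and then $A\subseteq I^{n+1}$. For $(\Rightarrow)$, if $I^{n+1}M:_R M\supsetneq I^{n+1}$ I pick $r$ in the difference; since $I^{n+1}$ is $\fkm$-primary, the maximal ideal of $R/I^{n+1}$ is nilpotent, so I choose $k\ge 0$ maximal with $r\fkm^k\not\subseteq I^{n+1}$. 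Any $s\in r\fkm^k\setminus I^{n+1}$ satisfies $s\fkm\subseteq r\fkm^{k+1}\subseteq I^{n+1}$ and $s\in(r)\subseteq I^{n+1}M:_R M$, so $s\in A\setminus I^{n+1}$, contradicting $A=I^{n+1}$.

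The main obstacle is the clean identification of $J^{n+1}:_S\fkM$ as the product $A\times(I^{n+1}M:_M\fkm)$, which correctly separates the contribution of $R$ from that of $M$; once this is in hand, (ii) is formal and only the converse of (i) requires the short socle-extraction argument in the Artinian local ring $R/I^{n+1}$.
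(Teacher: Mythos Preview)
Your proof is correct and follows essentially the same route as the paper: compute $J^{n+1}=I^{n+1}\times I^{n+1}M$, identify $J^{n+1}:_S\fkM=A\times(I^{n+1}M:_M\fkm)$ with $A=(I^{n+1}:_R\fkm)\cap(I^{n+1}M:_R M)$ (the paper quotes this as a lemma from \cite{Ali}, you verify it directly), read off $\ir_S(J^{n+1})=\ell_R(A/I^{n+1})+\ir_M(I^{n+1}M)$, and deduce the inequalities and equality cases. The only cosmetic difference is in (i)\,$(\Rightarrow)$: the paper observes that $A=I^{n+1}$ means $\Soc(R/I^{n+1})\cap\Ann_{R/I^{n+1}}(M/I^{n+1}M)=0$ and then invokes that a nonzero ideal in an Artinian local ring meets the socle, whereas you prove that fact by hand via the maximal-$k$ socle-extraction argument; these are the same idea.
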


In \cite{GKL}, S. Goto, S. Kumashiro, and N. T. H. Loan explored the Cohen-Macaulay type of idealization. Suppose $M$ is a Cohen-Macaulay $R$-module and $Q$ is a parameter ideal of $M$, then $\mathrm{f}_Q^0(M)$ is the Cohen-Macaulay type of $M$ (see Remark \ref{lhvb}). With this observation, Theorem \ref{T2} is a generalization of \cite[Theorem 2.2]{GKL}. 

Note that the inequalities in Theorem \ref{T2} can be strict (Example \ref{ineq}). We also remark that irreducible multiplicity is not compatible with the reduction ideal (Example \ref{VD2}).

In the next section we prove the main results.

\section{Proof of main results}

Let $(R,\m)$ be a Noetherian local ring. Then $R\ltimes M$ is also a Noetherian local ring with unique maximal ideal $\m \times M$, and $\dim R\ltimes M=\dim R$ (see \cite{AW})). 
Let 
\begin{center}
$\rho: R\ltimes M \rightarrow R; (a, m) \mapsto a$ \quad and \quad $\sigma: R\rightarrow R\ltimes M; a \mapsto (a,0)$
\end{center}
be the canonical projection and the canonical inclusion, respectively. Then $\rho$ and $\sigma$ are local ring homomorphisms. Passing to these morphisms, an $R$-module $L$ can be regarded as an $R\ltimes M$-module, and vice versa. 
We then note that the length of $L$ as an $R$-module and that of $L$ as an $R\ltimes M$-module are the same, that is, $\ell_R(L)=\ell_{R\ltimes M} (L)$. 

To prove Theorem \ref{T2}, we need the following lemma about colon ideals in idealization. 

\begin{lemma}\label{colonofidealization} {\rm (\cite{Ali})}
Let $I\times N$ and $I'\times N'$ be homogeneous ideals of $R\ltimes M$. Then
	$$(I\times N) :_{R\ltimes M} (I'\times N')=((I:_RI') \cap (N:_RN'))\times (N:_MI').$$
	Furthermore $(I\times N) :_{R\ltimes M} (I'\times N')$ is a homogeneous ideal of $R\ltimes M$.
	 \end{lemma}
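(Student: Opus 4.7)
The plan is to unpack the colon condition componentwise using the multiplication rule in $R\ltimes M$. Before doing so, I would record the structural fact that a subset of $R\ltimes M$ is a homogeneous ideal with respect to the natural $\mathbb{Z}$-grading (with $R$ in degree $0$ and $M$ in degree $1$, where $M\cdot M = 0$) precisely when it has the form $I\times N$ with $I$ an ideal of $R$, $N$ a submodule of $M$, and $IM\subseteq N$. This characterization is needed both to interpret the hypothesis on $I\times N$ and $I'\times N'$ and to verify that the right-hand side is again homogeneous.

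For the core identity I would argue by double inclusion starting from the product formula
$$(a,m)(a',m') = (aa',\,am'+a'm).$$
For the containment $\subseteq$, I would fix $(a,m)$ in the colon and exploit the freedom to choose $(a',m')\in I'\times N'$ coordinate by coordinate: setting $m'=0$ with $a'\in I'$ arbitrary forces $aa'\in I$ and $a'm\in N$, yielding $a\in I:_RI'$ and $m\in N:_MI'$; setting $a'=0$ with $m'\in N'$ arbitrary forces $am'\in N$, yielding $a\in N:_RN'$. For the reverse inclusion, assuming $a\in (I:_RI')\cap(N:_RN')$ and $m\in N:_MI'$, I would simply plug back into the product formula: $aa'\in I$ is immediate, and the two summands $am'$ and $a'm$ of the second coordinate lie in $N$ separately, so their sum lies in $N$.

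The one non-bookkeeping step, and the most likely source of technical friction, is verifying that the asserted right-hand side is itself homogeneous, i.e.\ that
$$\bigl((I:_RI')\cap(N:_RN')\bigr)\cdot M \;\subseteq\; N:_MI'.$$
For $a$ in the intersection, $x\in M$, and $i'\in I'$, the element $ai'x$ lies in $IM$, which is contained in $N$ by the homogeneity hypothesis $IM\subseteq N$ on $I\times N$; hence $ax\in N:_MI'$. This is the unique place where the homogeneity assumption on the ideals is used, and once it is in hand the lemma follows immediately from the two inclusions above.
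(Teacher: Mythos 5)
The paper does not prove this lemma at all: it is quoted verbatim from the cited reference \cite{Ali}, so there is no in-paper argument to compare against. Your direct componentwise verification is correct and complete. The double-inclusion computation is exactly the standard one (testing against $(a',0)$ and $(0,m')$, which lie in $I'\times N'$ because it is a genuine Cartesian product, and then recombining via $(a,m)(a',m')=(aa',am'+a'm)$), and you correctly identify the only non-routine point: the containment $\bigl((I:_RI')\cap(N:_RN')\bigr)M\subseteq N:_MI'$ needed for the colon to be homogeneous, which follows from $i'(ax)=(ai')x\in IM\subseteq N$ and is precisely where the hypothesis that $I\times N$ is a homogeneous ideal (equivalently $IM\subseteq N$) enters. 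Your preliminary characterization of homogeneous ideals of $R\ltimes M$ as the sets $I\times N$ with $I$ an ideal, $N$ a submodule and $IM\subseteq N$ is the standard one from Anderson--Winders, which the paper itself relies on elsewhere, so the argument is self-contained and could serve as a proof of the lemma where the paper only offers a citation.
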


\begin{proof}[Proof of Theorem \ref{T2}] By \cite[Theorem 3.3 (2)]{AW} and by induction on $n$ we have $$J^{n+1}=(I\times IM)^{n+1}=I^{n+1}\times I^{n+1}M.$$
	By Lemma \ref{colonofidealization}, 
	$$J^{n+1}:_{R \ltimes M} (\m \times M)=((I^{n+1}:_R \m)\cap (I^{n+1}M:_R M))\times (I^{n+1}M:_M\m).$$ Then
	\begin{align*}
		\Big(J^{n+1} &:_{R \ltimes M} (\m \times M)\Big)/J^{n+1} \\
		&=\Big(((I^{n+1} :_R \m)\cap (I^{n+1}M:_R M))\times (I^{n+1}M:_M\m)\Big)/\Big(I^{n+1}\times I^{n+1}M\Big)\\
		&\cong \Big(\big((I^{n+1}:_R \m)\cap (I^{n+1}M:_R M)\big)/I^{n+1}\Big)\times \Big((I^{n+1}M:_M\m)/I^{n+1} M\Big).
		\end{align*}
	So,
	\begin{align*} 
		\begin{split} 
			\ell_R((I^{n+1}M &:_M\m)/I^{n+1} M ) \leq \ell_{R\ltimes M}(J^{n+1}:_{R\ltimes M} (\m \times M)/J^{n+1})\\
			 & =\ell_R((I^{n+1}:_R \m)\cap (I^{n+1}M:_R M)/I^{n+1} ) +\ell_R((I^{n+1}M:_M\m)/I^{n+1} M )\\
		 &\leq \ell_R(I^{n+1}:_R \m/I^{n+1} ) +\ell_R((I^{n+1}M:_M\m)/I^{n+1} M ).
		\end{split}
	\end{align*}
This proves the first statement.
Dividing by $n^{t-1}$ and taking $n\rightarrow \infty$, we get that $$\mathrm{f}_I^0(M) \leq \mathrm{f}_J^0(R\ltimes M) \leq \mathrm{f}_I^0(R)+\mathrm{f}_I^0(M) .$$

(i)  $\ir_M(I^{n+1}M) = \ir_{R\ltimes M}(J^{n+1}) $ if and only if $(I^{n+1}:_R \m)\cap (I^{n+1}M:_R M)=I^{n+1}.$ This is equivalent to $(0:_{R/I^{n+1}}\m) \cap (0:_{R/I^{n+1}} M/I^{n+1}M)=0$. The above condition means $\Soc(R/I^{n+1})=0$ or $M/I^{n+1}M$ is a faithful $R/I^{n+1}$-module. Since $R/I^{n+1}$ is an Artinian local ring, the former condition implies that $M=0$. Given that $M$ is nonzero, statement (i) is proven. 

(ii) $\ir_{R\ltimes M}(J^{n+1}) = \ir_R(I^{n+1})+\ir_M(I^{n+1}M)$ if and only if $I^{n+1}:_R \m \subseteq I^{n+1}M:_R~M$. This is equivalent to $(I^{n+1}:_R \m)M\subseteq I^{n+1}M$, which is also equivalent to $(I^{n+1}:_R\m)M= I^{n+1}M$.
\end{proof}

For a finitely generated $R$-module $M$ we call $\dim_{R/\m} \big(\Ext_R^r(R/\m, M)\big)$, where $\depth_R(M)=r$ the {\it type}  of $M$ and is denoted by $\mathrm{r}_R(M)$. If $M$ is Cohen-Macaulay, it is called the {\it Cohen-Macaulay type} of $M$. It is known that the Cohen-Macaulay type, the index of reducibility of $QM$, and the dimension of socle $\Soc(M/Q M)$, where $Q$ is a parameter of a Cohen-Macaulay $R$-module $M$ coincide as in the following remark (see \cite{CQT}).
\begin{remark}\label{lhvb}
	Suppose that $M$ is a Cohen-Macaulay $R$-module and $Q$ is a parameter ideal of $M$. Then
	$$\mathrm{r}_R(M)=\ell_R((Q:_M\m)/Q M)=\ir_M(QM)=\dim_{R/\m}(\Soc(M/Q M))= \mathrm{f}_Q^0(M).$$
	Furthermore, we get that
	\begin{align}\label{ctkq}
	\ir_M(Q^{n+1}M)=\mathrm{f}_Q^0(M)\binom{n+t-1}{t-1}.
	\end{align}
\end{remark}
%\begin{proof}
%We note a proof for the convenience of readers. Since $Q$ is an $\m$-primary ideal, $$\ir_M(QM)=\ell_R((Q:_M\m)/Q M)=\dim_{R/\m}(\Soc(M/Q M)).$$ Since $M$ is Cohen-Macaulay, $r_R(M)=\ell_R((Q:_M\m)/Q M)$. Furthermore,  we also have that $Q$ is generated by $M$-regular sequence of $t$ element. This follows that $Q^nM/Q^{n+1}M\cong \big(M/Q M\big )^{\binom{n+t-1}{t-1}}.$ Since $Q$ is generated by $M$-regular sequence,
%	$$Q^{n+1} M :\m \subseteq Q^{n+1} M : Q=Q^n M.$$
%	We have
%	\begin{align*}
%		\ir_M(Q^{n+1}M)&=\ell_R\big((Q^{n+1} M :\m)/Q^{n+1} M\big)\\
%		&=\ell_R\big(0:_{Q^n M/Q^{n+1} M}\m\big)\\
%		&=\ell_R((Q:_M\m)/Q M)\binom{n+t-1}{t-1}.
%	\end{align*}
%So, $\mathrm{f}_Q^0(M) =\ell_R((Q:_M\m)/Q M)$. Furthermore, we get that
%\begin{align}\label{ctkq}
%	\ir_M(Q^{n+1}M)=\mathrm{f}_Q^0(M)\binom{n+t-1}{t-1}.
%\end{align}
%\end{proof}
Then we get one of the main results in \cite{GKL}. Recall that a finitely generated $R$-module $M$ is a maximal Cohen-Macaulay $R$-module if $\depth_R(M)=\dim R$.
\begin{corollary} \cite[Theorem 2.2]{GKL} \label{Goto} Assume $R$ is Cohen-Macaulay. Let $M$ be a maximal Cohen-Macaulay $R$-module. Then
$$
\mathrm{r}_R(M) \leq \mathrm{r}(R \ltimes M) \leq \mathrm{r}(R)+\mathrm{r}_R(M) .
$$
Let $Q$ be a parameter ideal of $R$ and set $\overline{R}=R / Q, \overline{M}=M / Q M$. We then have the following.

(i) $\mathrm{r}(R \ltimes M)=\mathrm{r}_R(M)$ if and only if $\overline{M}$ is a faithful $\overline{R}$-module.

(ii) $\mathrm{r}(R \ltimes M)=\mathrm{r}(R)+\mathrm{r}_R(M)$ if and only if $\left(Q:_R \mathfrak{m}\right) M=Q M$.
\end{corollary}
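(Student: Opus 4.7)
The plan is to deduce Corollary~\ref{Goto} directly from Theorem~\ref{T2}, applied with $I$ replaced by a parameter ideal $Q$ of $R$ and specialized to $n=0$. Since $R$ is Cohen--Macaulay of dimension $d$ and $M$ is maximal Cohen--Macaulay, any parameter ideal $Q=(x_1,\ldots,x_d)$ of $R$ is simultaneously a parameter ideal of $M$. The intermediate task is then to translate every index of reducibility that appears in Theorem~\ref{T2} into a Cohen--Macaulay type, using Remark~\ref{lhvb}.

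For the right-hand side this is immediate: $\mathrm{r}(R)=\ir_R(Q)=\mathrm{f}_Q^0(R)$ and $\mathrm{r}_R(M)=\ir_M(QM)=\mathrm{f}_Q^0(M)$. For $\mathrm{r}(R\ltimes M)$, I would first observe that $R\ltimes M$ is itself Cohen--Macaulay of dimension $d$: by \cite{AW} one has $\dim(R\ltimes M)=\dim R=d$, and the standard depth formula $\depth_{R\ltimes M}(R\ltimes M)=\min\{\depth R,\depth M\}=d$ gives Cohen--Macaulayness. Next, setting $J=Q\times QM$, I would note that $J$ is generated by the $d$ elements $(x_1,0),\ldots,(x_d,0)$, since $(x_i,0)\cdot(0,m)=(0,x_im)$. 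Because $(R\ltimes M)/J\cong (R/Q)\ltimes(M/QM)$ has finite length, $J$ is $(\mathfrak{m}\times M)$-primary, hence a parameter ideal of the Cohen--Macaulay ring $R\ltimes M$. Thus Remark~\ref{lhvb} applies once more to give $\mathrm{r}(R\ltimes M)=\ir_{R\ltimes M}(J)=\mathrm{f}_J^0(R\ltimes M)$.

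With these three identifications in hand, the double inequality of Theorem~\ref{T2} at $n=0$ reads
\[
\mathrm{r}_R(M)=\ir_M(QM)\le \ir_{R\ltimes M}(J)=\mathrm{r}(R\ltimes M)\le \ir_R(Q)+\ir_M(QM)=\mathrm{r}(R)+\mathrm{r}_R(M),
\]
which is the asserted estimate. The equivalences in (i) and (ii) of the corollary follow by transcribing the corresponding clauses of Theorem~\ref{T2} at $n=0$: the lower bound is attained exactly when $M/QM=\overline{M}$ is faithful over $R/Q=\overline{R}$, and the upper bound is attained exactly when $(Q:_R\mathfrak{m})M=QM$.

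The only mildly non-trivial step I anticipate is the verification that $R\ltimes M$ is Cohen--Macaulay and that $J$ is a parameter ideal of it, so that Remark~\ref{lhvb} is legitimately applicable to the triple $(R\ltimes M, J, R\ltimes M)$; once that is in place, the corollary is essentially a dictionary translation of Theorem~\ref{T2}.
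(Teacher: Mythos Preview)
Your proposal is correct and follows essentially the same route as the paper: set $J=Q\times QM$, observe that $J$ is a parameter ideal of the Cohen--Macaulay ring $R\ltimes M$, and use Remark~\ref{lhvb} to convert the inequalities and equivalence conditions of Theorem~\ref{T2} (with $I=Q$, $n=0$) into statements about Cohen--Macaulay types. The only cosmetic differences are that the paper cites \cite[Corollary~4.14]{AW} for the Cohen--Macaulayness of $R\ltimes M$ where you invoke the depth formula, and that the paper phrases the main inequality via the $\mathrm{f}^0$-coefficients while you use the $\ir$-values at $n=0$; by Remark~\ref{lhvb} these coincide in the Cohen--Macaulay setting, so the arguments are equivalent.
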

\begin{proof}
Assume $Q=(r_1, \dots, r_d)$ is a parameter ideal of $R$. Since $M$ is a maximal Cohen-Macaulay $R$-module, its dimension is $d$, and thus $Q$ is also a parameter ideal of $M$. Let $\overline{Q}$ be an ideal of $R \ltimes M$ generated by $(r_1,0), \dots, (r_d,0)$. Then $\overline{Q}=Q\times Q M$ is a parameter ideal of $R \ltimes M$. By Theorem \ref{T2}, 	$$
\mathrm{f}_Q^0(M) \leq \mathrm{f}_{\overline{Q}}^0(R\ltimes M) \leq \mathrm{f}_Q^0(R)+\mathrm{f}_Q^0(M) .
$$
Since $R$ is a Cohen-Macaulay ring and $M$ is a maximal Cohen-Macaulay $R$-module, both $R$ and $M$ are Cohen-Macaulay $R$-modules. Furthermore, the idealization ring $R \ltimes M$ is also Cohen-Macaulay (see \cite[Corollary 4.14]{AW}).
Then we get by Remark \ref{lhvb} that $\mathrm{f}_I^0(M)=\mathrm{r}(M)$ and $\mathrm{f}_{\overline{Q}}^0(R\ltimes M)=\mathrm{r}(R \ltimes M).$ 
	Therefore
$$
\mathrm{r}_R(M) \leq \mathrm{r}(R \ltimes M) \leq \mathrm{r}(R)+\mathrm{r}_R(M) .
$$
This proves the main inequality.

(i) By Remark \ref{lhvb}, $\mathrm{r}_R(M) = \ir_M(QM)$ and $\mathrm{r}(R \ltimes M) = \ir_{R\ltimes M}(\overline{Q}(R\ltimes M))$. Thus, the equality $\mathrm{r}(R \ltimes M)=\mathrm{r}_R(M)$ is equivalent to $\ir_{R\ltimes M}(\overline{Q}(R\ltimes M)) = \ir_M(QM)$. Applying Theorem \ref{T2}(i) with $I=Q$ and $n=0$, this equality holds if and only if $\overline{M}$ is a faithful $\overline{R}$-module.

(ii) We have $\mathrm{r}(R \ltimes M) = \ir_{R\ltimes M}(\overline{Q}(R\ltimes M))$, $\mathrm{r}(R) = \ir_R(Q)$, and $\mathrm{r}_R(M) = \ir_M(QM)$ by Remark \ref{lhvb}.
Therefore, the equality $\mathrm{r}(R \ltimes M)=\mathrm{r}(R)+\mathrm{r}_R(M)$ is equivalent to $\ir_{R\ltimes M}(Q \times QM) = \ir_R(Q) + \ir_M(QM)$.
Applying Theorem \ref{T2}(ii) with $I=Q$ and $n=0$, we directly find that this equality holds if and only if $(Q:_R \m)M= QM$.
\end{proof}
An example for condition (ii) in Corollary \ref{Goto} is as follows. Let $(R,\m)$ be a non-regular Cohen-Macaulay local ring and $i\geq 0$ be an integer. Let $M=\Omega_R^i(R/\m)$ denote the $i$-th syzygy module of the simple $R$-module $R/\m$ in its minimal free resolution. Then by \cite[Theorem 4.1]{GKL}, $\left(Q:_R \mathfrak{m}\right) M=Q M$ for every parameter ideal $Q$ of $R$.

The following example shows that the inequalities in Corollary \ref{Goto} can be strict.
\begin{example}\label{ineq} (\cite[Example 2.3]{GKL}) 
	Let $k$ be a field and $\ell \geq 2$ be an integer. Set $S=k[[X_1, X_2, \ldots, X_{\ell}]]$ the formal power series ring in variables $X_1, X_2, \ldots, X_{\ell}$. Let $\frak{a}=\mathbb{I}_2(\mathbb M)$ denote the ideal of $S$ generated by the maximal minors of the matrix $$\mathbb M= 
	\begin{pmatrix}
		X_1 & X_2 &\ldots & X_{\ell -1} & X_{\ell}\\
	X_2 & X_3 &\ldots & X_{\ell } & X_{1}^q
	\end{pmatrix},
$$
where $\q \geq 2$. We set $R=S/\frak a$. Then $R$ is a Cohen-Macaulay local ring of dimension one. For each integer $2\leq p\leq \ell$, we consider the ideal $I_p=(x_1)+(x_p, x_{p+1}, \ldots, x_{\ell})$ of $R$, where $x_i$ denotes the image of $X_i$ in $R$. Then by \cite[Example 2.3]{GKL},  $r(R)=\ell-1,$  $r(R \ltimes I_p)=(l-p)+1$ and
$$r_R(I_p)=
\begin{cases}
	\ell & \mbox{if} \,\, p=2\\
	\ell -1 & \mbox{if} \,\, p\geq3
\end{cases}
$$
for each $2\leq p\leq \ell.$ Let $\ell=p=3$ and set $M=I_3$. we have $$r(M)<r(R\ltimes M)<r(R)+r(M).$$ Note that $M$ is Cohen-Macaulay $R$-module of dimension 1. Let $Q=(r)$ be a parameter ideal of $R$. Then it is a parameter ideal of $M$. Set $\overline{Q}=(r,0)$. Then $\overline{Q}$ is a parameter ideal of $R \ltimes M$ and $\overline{Q}=Q\times Q M$. By Remark \ref{lhvb}, $\mathrm{f}_Q^0(M)=r_R(M)$. So, 
$$\mathrm{f}_Q^0(M)<\mathrm{f}_{\overline{Q}}^0(R\ltimes M)<\mathrm{f}_Q^0(R)+\mathrm{f}_Q^0(M).$$
Since $R$, $M$ and $R\ltimes M$ are Cohen-Macaulay of dimension 1, by (\ref{ctkq}) in proof of Remark \ref{lhvb}, 
$$\ir_M(Q^{n+1}M)<\ir_{R\ltimes M}(\overline{Q}^{n+1})<\ir_R(Q^{n+1})+\ir_M(Q^{n+1}M).$$
\end{example}

The following example shows that the irreducible multiplicity is not compatible with the reduction ideal. 

\begin{example} \label{VD2} Consider the ring $R$  in Example \ref{ineq}, where $p=\ell =q=2.$ Then
	$$R=k[[X_1,X_2]]/(X_1^3-X_2^2)\cong k[[t^2, t^3]],$$
	where $x_1 \mapsto t^2, x_2 \mapsto t^3$ and $x_i$ denotes the image of $X_i$ in $R$. Set $I=I_2$ and $J=(x_1)$. Then $I$ is the maximal ideal $\m$ of $R$. Since $I^2=JI$, we get that  $J$ is a reduction ideal of $I$. By Example \ref{ineq}, $r(R)=\ell-1=1.$ Since $J$ is a parameter ideal of the Cohen-Macaulay ring $R$, $\mathrm{f}_J^0(R)=1$ by Remark \ref{lhvb}. Now, we compute $\ell_R\big((I^{n+1}  :\m)/I^{n+1} \big)$. Since $I^{n+1}=x_1^n I$, 
		$$I^{n+1}  :\m \cong (t^{2n+2}, t^{2n+3}):(t^2, t^3)=(t^{2n}, t^{2n+1}).$$
		Hence
		\begin{align*}
			\ell_R\big((I^{n+1}  :\m)/I^{n+1} \big) &=\ell_R\big((t^{2n}, t^{2n+1})/(t^{2n+2}, t^{2n+3}) \big)\\
			&=\dim_k(k t^{2n}+kt^{2n+1})\\
			&=2.
		\end{align*}
	So, $\mathrm{f}_I^0(R)=2$ and $\mathrm{f}_I^0(R) \neq \mathrm{f}_J^0(R).$
	\end{example}

{\it Acknowledgment.} The author wishes to express his thanks to S. Kumashiro for useful discussion for this work.

\end{document}